\theoremstyle{remark}{
\newtheorem{Def}{{\rm Definition}}
\newtheorem{Ex}{{\rm Example}}
\newtheorem{Rem}{{\rm Remark}}

\newtheorem*{MainProb}{Main Problem}
}
\theoremstyle{plain}
{

\newtheorem{Prop}{Proposition}

\newtheorem{MainThm}{Main Theorem}

\newtheorem{Lem}{Lemma}

}
\begin{document}
\title[Reeb graphs of smooth functions on $3$-dimensional closed manifolds]{On Reeb graphs induced from smooth functions on $3$-dimensional closed manifolds which may not be orientable}
\author{Naoki Kitazawa}
\keywords{Smooth functions and maps. Reeb spaces and Reeb graphs. Morse functions and fold maps, Differential topology.\\
\indent {\it \textup{2020} Mathematics Subject Classification}: Primary~57R45. Secondary~57R19.}
\address{Institute of Mathematics for Industry, Kyushu University, 744 Motooka, Nishi-ku Fukuoka 819-0395, Japan\\
 TEL (Office): +81-92-802-4402 \\
 FAX (Office): +81-92-802-4405 \\
}
\email{n-kitazawa@imi.kyushu-u.ac.jp}
\urladdr{https://naokikitazawa.github.io/NaokiKitazawa.html}
\maketitle
\begin{abstract}
The {\it Reeb space} of a smooth function is
a topological and combinatoric object and fundamental and important in understanding topological and geometric properties of the manifold of the domain. It is the graph and a topological space endowed with a natural topology. This is defined as the quotient space of the manifold of the domain where the equivalence relation is as follows: two points in the manifold are equivalent if and only if they are in a same connected component of a level set or a preimage. In considerable cases they are graphs ({\it Reeb graphs}): if the function is a so-called {\it Morse}(-{\it Bott}) functions for example, then this is the graph such that a point is a vertex if and only if the corresponding connected component of the level set contains some singular points.

The author previously constructed explicit smooth functions on suitable $3$-dimensional connected, closed and orientable manifolds whose Reeb graphs are isomorphic to prescribed graphs and whose preimages are as prescribed types. This gives a new answer to so-called realization problems of graphs as Reeb graphs of smooth functions of suitable classes. The present paper concerns a variant in the case where the $3$-dimensional manifolds may not be non-orientable extending the result before.       
\end{abstract}

\section{Introduction}
\label{sec:1}

The {\it Reeb space} of a smooth function $c$ is defined as follows.

For a smooth map $c:X \rightarrow Y$, we can define an equivalence relation ${\sim}_c$ on $X$ as follows: $x_1 {\sim}_{c} x_2$ holds if and only if they are in a same connected component of a preimage $c^{-1}(y)$.

\begin{Def}
The quotient space $W_c:=X/{\sim}_c$ is the {\it Reeb space} of $c$.
\end{Def}
Hereafter, $q_c:X \rightarrow W_c$ denotes the quotient space. We can define a map which is denoted by $\bar{c}$ uniquely by the relation $c=\bar{c} \circ q_c$. 

For a smooth manifold $X$, $T_pX$ denotes the tangent space at $p$.
A {\it singular} point of a smooth map $c:X \rightarrow Y$ is a point $p \in X$ where the rank of the differential $dc_p:T_pX \rightarrow T_c(p)Y$ is smaller than $\min \{\dim X,\dim Y\}$. The {\it singular set} of $c$ is the set of all singular points.
For a smooth map $c$, the {\it singular value} is a point $c(p)$ which is a value at some singular point $p$. A {\it regular value} of the map is a point which is not a singular value in $Y$. 

Reeb spaces are in considerable cases graphs where the vertex set is the set of all points $p$ the preimages ${q_f}^{-1}(p)$ of which contain at least one singular point of $f$. 
Reeb spaces are Reeb graphs for smooth functions on compact manifolds with finitely many singular values for example (\cite{saeki4}). In the present paper we only concentrate on such smooth functions essentially.

One of pioneering paper on Reeb spaces is \cite{reeb} for example and they have been fundamental and important topological objects and tools in algebraic topological studies and differential topological ones on differentiable manifolds. 

They inherit topological information such as homology groups and cohomology rings. See \cite{kitazawa0.1}--\cite{kitazawa0.6} of the author and \cite{saekisuzuoka} for example for related expositions. 
They essentially concentrate on {\it fold} maps such that preimages of regular values are disjoint unions of spheres. {\it Fold} maps are higher dimensional variants of so-called Morse functions and the definition of a fold map is introduced in the next section.

The present paper studies another important problem as follows.
\begin{MainProb}
For a finite and connected graph with at least one edge, can we construct a smooth function on a (compact) manifold (satisfying some good conditions) inducing a Reeb graph isomorphic to the graph? 
\end{MainProb}

\cite{sharko} is a pioneering paper on this. \cite{martinezalfaromezasarmientooliveira}, \cite{masumotosaeki}, \cite{michalak} and \cite{saeki4} are some of  related studies among the others. The author also obtained results in \cite{kitazawa} and \cite{kitazawa2}.
Recently Reeb spaces are also important in applied or applications of mathematics such as data analysis and visualizations as \cite{sakuraisaekicarrwuyamamotoduketakahashi} shows for example. This problem will play important roles in such scenes.

Note that a graph is naturally homeomorphic to a $1$-dimensional polyhedron.

An {\it isomorphism} between two graphs $G_1$ and $G_2$ is a homeomorphism from $G_1$ to $G_2$ mapping the vertex set of $G_1$ onto the vertex set of $G_2$. 

A graph has a continuous function in the following if and only if it has no loop.
\begin{Def}
A continuous real-valued function $g$ on a graph $G$ is said to be {\it good} if it is injective on each edge, which is homeomorphic to a closed interval.
\end{Def}
In the present paper, we first show the following result. The unit disk $D^k$, {\it Morse} functions. {\it Height} functions, {\it fold} maps, {\it special generic} maps, and so on, are explained in the next section.
We omit expositions on classical fundamental theory on (connected and) compact (closed) surfaces and their genera, Euler numbers and topologies.
\begin{MainThm}
\label{mthm:1}
Let $G$ be a finite and connected graph $G$ which has at least one edge and no loops. Let there exist a good function $g$ on $G$.
Suppose that an integer is assigned to each edge by an integer-valued function $r_G$ on the edge set satisfying either of the following two for each edge $e$. 
\begin{enumerate}
\item $r_G(e) \geq 0$.
\item $r_G(e)$ is even and negative.
\end{enumerate}
Then there exist a $3$-dimensional closed, connected and orientable manifold $M$ and a smooth function $f$ on $M$ satisfying the following four properties.
\begin{enumerate}
\item The Reeb graph $W_f$ of $f$ is isomorphic to $G${\rm :} we can take a suitable isomorphism $\phi:W_f \rightarrow G$ compatible with the remaining properties.
\item If we consider the natural quotient map $q_f:M \rightarrow W_f$ and for each point $\phi(p) \in G$ {\rm (}$p \in W_f${\rm )} that is not a vertex and that is in an edge $e$, then the preimage ${q_f}^{-1}(p)$ is a closed, connected, and orientable surface of genus $r_G(e)$ if $r_G(e) \geq 0$ and a non-orientable one whose genus is $-r_G(e)$ if $r_G(e) < 0$.
\item For a point $p \in M$ mapped by $q_f$ to a vertex $v_p:=q_f(p) \in W_f$, $f(p)=g \circ \phi(v_p)$.
\item Around each singular point $p$, the function has either of the following forms.
\begin{enumerate}
\item A Morse function if at the vertex $q_f(p)$ $g$ does not have a local extremum.
\item A height function if the vertex $q_f(p)$ is of degree $1$, at the vertex $q_f(p)$ $g$ has a local extremum and at the edge $e$ containing the vertex $r_G(e)=0$.
\item A composition of a Morse function with a height function if the vertex $q_f(p)$ is of degree greater than $1$ and $g$ has a local extremum there.
\item Let the vertex $q_f(p)$ be of degree $1$, at the vertex $q_f(p)$ $g$ have a local extremum and let us assume that at the edge $e$ containing the vertex $r_G(e)=-2$. In this case, the form is obtained in the following way.
\begin{enumerate}
\item Define a smooth function $h$ such that $h(x)=0$ for $x\leq 0$ and that $h(x)={\epsilon}_h e^{-\frac{1}{x}}$ for $x>0$ and a sufficiently small ${\epsilon}_h>0$.
\item Consider a special generic map on the total space of a non-trivial smooth bundle over a circle whose fiber is diffeomorphic to the $2$-dimensional unit sphere into the plane such that the restriction to the singular set is a smooth embedding and that the image of the singular set is the disjoint of two circles, which are centered at $(0,0)$ and whose radii are $\frac{1}{2}$ and $\frac{3}{2}$, respectively, where the underlying metric is the standard Euclidean metric, and that the image of the map is the closure of the domain surrounded by the two embedded circles. 
\item We consider the restriction of the previous map to the preimage of the unit disk $D^2$ in the plane and compose the previous function with a height function mapping $(x_1,x_2)$ to ${x_1}^2+{x_2}^2$.
\item We compose the resulting function with a smooth function mapping $x$ to $\pm h(x-\frac{1}{4})$ with the domain restricted suitably. 
\item We consider the sum of the previous function and a real-valued constant function. This is the form we consider here.
\end{enumerate}
Furthermore, the preimage of $q_f(p)$ is a circle.
\item Let the vertex $q_f(p)$ be of degree $1$, let $g$ have a local extremum there and let us assume that at the edge $e$ containing the vertex $r_G(e) \neq 0,-2$. In this case, the form is a composition of a fold map into the open unit disk with a height function. Furthermore, the fold map is constructed as a map satisfying the following two.
\begin{enumerate}
\item Each connected component of the preimage of each point in the open unit disk of target is a circle, a bouquet of two circles, or a 1-dimensional polyhedron obtained by an iteration of identifying two points whose small open neighborhoods are homeomorphic to an open interval in distinct connected $1$-dimensional polyhedra, starting from finitely many circles.
\item For the singular set of the fold map, remove finitely many singular points and consider the restriction there. This is an embedding. The finitely many singular points removed before are all in a same preimage or the preimage of the origin for the fold map.  
\end{enumerate}
\end{enumerate}
\end{enumerate}
\end{MainThm}
As Remark \ref{rem:1} says, once it was announced that we can immediately show this. However, we need additional arguments.  
As another result, we also show the following result. This is new including the situation.
\begin{MainThm}
\label{mthm:2}
Let $G$ be a finite and connected graph which has at least one edge and no loops. Let there exist a good function $g$ on $G$.
Suppose that an integer is assigned to each edge by an integer-valued function $r_G$ on the edge set satisfying the following two.
\begin{enumerate}
\item For each vertex $v$, the difference $D_v:=\sharp A_{{\rm up},v}-\sharp A_{{\rm low},v}$ of the two following numbers are even.
\begin{enumerate}
\item The number of edges containing $v$ as the point at which the restrictions of the function $g$ to the edges have the minima such that the values of the function $r_G$ there are odd and negative: let $A_{{\rm up},v}$ denote the set of all such edges and $\sharp A_{{\rm up},v} \geq 0$ denote the size of the set, which is an integer. 
\item The number of edges containing $v$ as the point at which the restrictions of the function $g$ to the edges have the maxima such that the values of the function $r_G$ there are odd and negative: let $A_{{\rm low},v}$ denote the set of all such edges and $\sharp A_{{\rm low},v} \geq 0$ denote the size of the set, which is an integer. 
\end{enumerate}
\item Let $v$ be an arbitrary vertex of $G$ at which $g$ does not have a local extremum and $D_v \neq 0$.
\begin{enumerate}
\item Let $D_v>0$. Let $B_{{\rm low},v}$ denote the set of all edges containing $v$ as the point at which the restrictions of the function $g$ to the edges have the maxima such that the values of the function $r_G$ there are negative. For each edge $e \in {B_{{\rm low},v}}$ define ${r_G}^{\prime}(e)$ as the greatest even number satisfying ${r_G}^{\prime}(e) \leq |r_G(e)|$. The sum ${\Sigma}_{e \in B_{{\rm low},v}} {r_G}^{\prime}(e)$ satisfies $D_v \leq {\Sigma}_{e \in B_{{\rm low},v}} {r_G}^{\prime}(e)$.
\item Let $D_v<0$. Let $B_{{\rm up},v}$ denote the set of all edges containing $v$ as the point at which the restrictions of the function $g$ to the edges have the minima such that the values of the function $r_G$ there are negative. For each edge $e \in {B_{{\rm up},v}}$ define ${r_G}^{\prime}(e)$ as the greatest even number satisfying ${r_G}^{\prime}(e) \leq |r_G(e)|$. The sum ${\Sigma}_{e \in B_{{\rm up},v}} {r_G}^{\prime}(e)$ satisfies $|D_v| \leq {\Sigma}_{e \in B_{{\rm up},v}} {r_G}^{\prime}(e)$.
\end{enumerate}
\end{enumerate}
Then there exist a $3$-dimensional closed and connected manifold $M$ and a smooth function $f$ on $M$ satisfying the four properties in Main Theorem \ref{mthm:1}.

\end{MainThm} 
We explain relationship among \cite{kitazawa}, \cite{saeki4} and the present paper.
\cite{kitazawa} concerns cases where the $3$-dimensional manifolds and preimages of regular values are orientable and Main Theorems extends this and this says that the author has a good idea for the present problem (Problem 3).  
\cite{saeki4} generalizes a main result of \cite{kitazawa} partially and regarded as a paper motivated by the paper. 
This also generalize Main Theorems partially. 

\cite{saeki4} generalizes the manifold assigned to each edge to a general compact (closed) manifold and for edges containing a vertex, the manifolds satisfy a condition from theory of cobordisms of manifolds.

On the other hand, explicit classes of smooth functions are not considered and used functions are essentially ones obtained by integrating so-called {\it bump} functions in \cite{saeki4}. A {\it bump} function is a kind of smooth
 maps which are not real-analytic. In \cite{kitazawa} and the present paper more explicit functions are used. 

Moreover, other related studies introduced before are essentially ones for smooth functions on surfaces or Morse functions such that preimages of regular values are disjoint unions of spheres.  

We prove Main Theorems in the next section.
\section{Several terminologies and the proof of Main Theorem \ref{mthm:1}}
\begin{Def}
\label{def:3}
A smooth map from a manifold of dimension $m>0$ with no boundary into a manifold of dimension $n>0$ with no boundary is said to be a {\it fold} map if the relation $m \geq n$ holds and at each singular point $p$, there exists an integer satisfying $0 \leq i(p) \leq \frac{m-n+1}{2}$ and the map has the form $(x_1,\cdots,x_m) \mapsto (x_1,\cdots,x_{n-1},\sum_{k=n}^{m-i(p)}{x_k}^2-\sum_{k=m-i(p)+1}^{m}{x_k}^2)$ for suitable coordinates. 
\end{Def}
The case where the manifold of the target is the line is for {\it Morse} functions on manifolds with no boundaries. We mainly consider Morse functions on smooth manifolds with non-empty boundaries or non-compact manifolds with no boundaries in the present paper.
\begin{Prop}
For a fold map in Definition \ref{def:3}, the integer $i(p)$ is unique for any singular point $p$ and the set of all singular points of an arbitrary fixed $i(p)$ is a closed and smooth submanifold of dimension $n-1$ with no boundary such that the map obtained by restricting the original map there is a smooth immersion.  
\end{Prop}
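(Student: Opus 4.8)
The plan is to derive everything from the local normal form in Definition \ref{def:3} and then patch the local pictures together. First I would fix a singular point $p$ and read off the Jacobian of $c$ in normal coordinates $(x_1,\dots,x_m)$: its first $n-1$ rows form an identity block, while the last row is $(0,\dots,0,\,2x_n,\dots,2x_{m-i(p)},\,-2x_{m-i(p)+1},\dots,-2x_m)$. Consequently, near $p$ the differential $dc$ has rank $n$ off the coordinate plane $P:=\{x_n=\cdots=x_m=0\}$ and rank $n-1$ on $P$. This already yields, locally: the singular set coincides with $P$, a smooth submanifold of dimension $n-1$ without boundary; the restriction $c|_P$ is $(x_1,\dots,x_{n-1})\mapsto(x_1,\dots,x_{n-1},0)$, an embedding and hence an immersion; and at every point of $P$ the map is again in fold normal form with the same integer $i(p)$, so $p\mapsto i(p)$ is locally constant along the singular set.

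Next I would establish the uniqueness of $i(p)$ by recognizing it as (the smaller member of) a coordinate-free invariant. At a singular point $p$ one has $\dim\ker dc_p=m-n+1$ and $\dim\bigl(T_{c(p)}Y/\mathrm{image}\,dc_p\bigr)=1$; call these spaces $K_p$ and $Q_p$. Choosing any germ of submersion $\rho\colon(Y,c(p))\to(\mathbb{R},0)$ with $d\rho_{c(p)}\circ dc_p=0$, the composite $F:=\rho\circ c$ has $p$ as a critical point, so its Hessian is a well-defined symmetric bilinear form on $T_pX$, and I would check that its restriction to $K_p$ is nondegenerate and, evaluated in the normal coordinates, has exactly $i(p)$ negative and $m-n+1-i(p)$ positive squares for one generator of $Q_p\cong\mathbb{R}$ and the reverse for the other. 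By Sylvester's law of inertia the unordered pair $\{i(p),\,m-n+1-i(p)\}$ depends only on $c$ and $p$; the constraint $0\le i(p)\le\frac{m-n+1}{2}$ forces $i(p)$ to be the minimum of this pair, hence $i(p)$ is intrinsic.

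Finally I would globalize. Since the rank of a matrix is lower semicontinuous, the set where $dc$ has full rank $n$ is open, so the singular set $S(c)$ is closed in $X$; by the local analysis it is a smooth $(n-1)$-dimensional submanifold without boundary and $c|_{S(c)}$ is an immersion (immersivity is a local condition). Because $i(\cdot)$ is locally constant on $S(c)$, for each fixed value $i$ the subset $S_i(c)$ of singular points of index $i$ is simultaneously open and closed in $S(c)$, hence closed in $X$, and inherits from $S(c)$ the structure of a smooth $(n-1)$-submanifold with no boundary on which $c$ restricts to an immersion. This is precisely the assertion.

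I expect the one genuinely delicate point to be the coordinate-freeness in the uniqueness step: verifying that the Hessian of $\rho\circ c$ restricted to $K_p$ is independent of the auxiliary data (the submersion $\rho$ and the chosen coordinates on $X$ and $Y$), up to the unavoidable sign ambiguity coming from the two generators of the line $Q_p$. A convenient way to isolate this is to restrict $c$ to a germ of $(m-n+1)$-dimensional slice through $p$ transverse to the fibres and to observe that the quadratic part of $\rho$ contributes nothing to the Hessian on $K_p$; everything else in the proof is a direct reading of the normal form.
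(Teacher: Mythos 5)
Your argument is correct, and it is essentially the standard one; the only thing to note is that the paper itself does not prove this Proposition at all — it is stated as a known fact from singularity theory, with the reader referred to the references (Golubitsky--Guillemin, Saeki) for the fundamentals. So there is no "paper proof" to diverge from; your write-up supplies the folklore argument in full: read the singular set and the immersivity of the restriction off the Jacobian of the normal form, make $i(p)$ intrinsic by identifying the unordered pair $\{i(p),\,m-n+1-i(p)\}$ with the inertia of the well-defined Hessian of $\rho\circ c$ on $\ker dc_p$ (for any local submersion $\rho$ annihilating the image of $dc_p$), and then use lower semicontinuity of rank plus local constancy of the index to globalize. Two small remarks on exposition rather than substance: the claim in your first paragraph that $i(\cdot)$ is locally constant along the singular set logically presupposes the uniqueness you only establish in the second paragraph, so the two steps should be ordered accordingly; and the verification you flag as delicate (that the second-order part of $\rho$ contributes nothing to the Hessian on the kernel) is exactly the chain-rule computation you indicate, since the first derivatives of $c$ in kernel directions vanish at $p$, so the argument closes without further input. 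With those cosmetic adjustments your proof is complete and matches what the cited references provide.
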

In this proposition, we call $i(p)$ the {\it index} of $p$.
\begin{Def}
A fold map is said to be {\it special generic} if the index of each singular point is $0$.
\end{Def}

For fundamental theory on singularity theory and differential topological properties of fold maps, see \cite{golubitskyguillemin}, \cite{saeki}, \cite{saeki2}, among the others.

We prove our Main Theorems.

Hereafter, the $k$-dimensional Euclidean space ${\mathbb{R}}^k$ including the line ($\mathbb{R}$) and the plane (${\mathbb{R}}^2$) are endowed with the standard Euclidean metrics.
The sphere which is centered at the origin of ${\mathbb{R}}^{k+1}$ and whose radius is $1$ is the $k$-dimensional unit sphere.
The (open) disk which is centered at the origin of ${\mathbb{R}}^{k}$ and whose radius is $1$ is the $k$-dimensional (resp. open) unit disc. $D^k$ denotes the unit disc, which is a closed subset. The open unit disk is its interior.

{\it Height} functions of (open) disks or (open) unit disks are Morse functions with exactly one singular point $p$ with $i(p)=0$ (in the interior if the disk is not open). In other words, height functions are functions having the form $(x_1,\cdots,x_m) \mapsto \pm {\Sigma}_{j=1}^m {x_j}^2+c$ for suitable coordinates and a constant $c \in \mathbb{R}$. 
\begin{proof}[A proof of Main Theorem \ref{mthm:1}.]
We need to add several arguments to the proof of Theorem 1 of \cite{kitazawa}.\\

First we introduce several smooth functions. Let $a<b$ be real numbers and $\{t_j\}_{j=1}^l$ be a sequence of real numbers in $(a,b)$ of length $l \geq 0$.
$\tilde{f}_{{\rm P},\{t_j\},[a,b]}$ denotes a function satisfying the following six. 
\begin{enumerate}
\item The manifold of the domain is represented as a connected sum of $l$ copies of the real projective plane with the interiors of two copies of the $2$-dimensional unit disk  smoothly and disjointly embedded removed.
\item The Reeb space is homeomorphic to a closed interval.
\item The preimage of $\{a,b\}$ and the boundary of the manifold of the domain agree.  
\item There exist exactly $l$ singular points. We can take the sequence $\{s_j\}_{j=1}^l$ of all singular points so that $\tilde{f}_{{\rm P},\{t_j\},[a,b]}(s_j)=t_j$ holds.
\item Around each singular point, the function is a Morse function.
\item Preimages of regular values are always circles.
\end{enumerate}

${\tilde{f}}_{{\rm K,u},t,[a,b]}$ (${\tilde{f}}_{{\rm K,d},t,[a,b]}$) denotes a function on a $3$-dimensional compact and connected manifold satisfying the following five.
\begin{enumerate}
\item The function has exactly one singular point and $t$ is the singular value. The function is a Morse function there.
\item The Reeb space is homeomorphic to a closed interval.
\item The preimage of $\{a,b\}$ and the boundary of the manifold of the domain agree.
\item The preimage of $a$ (resp. $b$) is diffeomorphic to the $2$-dimensional unit sphere.
\item The preimage of $b$ (resp. $a$) is diffeomorphic to the Klein bottle.
\end{enumerate}

We also need several fundamental arguments on relationship between Morse functions and singular points, and the manifolds. In fact the theory of so-called {\it handles} will help us to understand the structures of the presented functions and smooth functions which are in some senses similar introduced later. In the proof of Main Theorem \ref{mthm:2}, we explain the theory of handles in short with a short proof on the construction of another new function $\tilde{f}_{{\rm P},t,[a,b]}$.

For systematic theory on relationship between handles and singular points of Morse functions, see \cite{milnor} for example.

We come back to the proof. Besides the original proof of Theorem 1 of \cite{kitazawa}, we need the following four. \\
\ \\
Ingredient 1 Construction around a vertex $v \in G$ contained in some edge $e$ satisfying $r_G(e)<0$ where the function $g$ does not have a local extremum. \\
We consider a small regular neighborhood $N(v) \subset G$ and we can regard this as a graph whose edge set consists of closed intervals being subsets of edges of $G$ and containing $v$ and which satisfies the following two.

\begin{enumerate}
\item Mutually distinct edges of the graph $N(v)$ are always closed intervals in mutually distinct edges of $G$.
\item For each edge of $G$ containing $v$, there exists a unique edge of $N(v)$ being a closed interval in the edge of $G$.
\end{enumerate}
  
By methods of Michalak (\cite{michalak}) and the author (\cite{kitazawa}), we construct a local smooth function $\tilde{f_{v,0}}$ onto a small closed interval $[g(v)-{\epsilon}_v,g(v)+{\epsilon}_v]$ satisfying the following five properties where ${\epsilon}_v>0$ is a sufficiently small real number.
\begin{enumerate}
\item $\tilde{f_{v,0}}$ is a Morse function with exactly one value $g(v)$.
\item The preimage of $\{g(v)-{\epsilon}_v,g(v)+{\epsilon}_v\}$ and the boundary of the manifold of the domain agree.
\item There exists a suitable isomorphism ${\phi}_{v,0}$ from the Reeb space $W_{\tilde{f_{v,0}}}$ onto $N(v)$ mapping ${(\bar{\tilde{f_{v,0}}})}^{-1}(g(v)) \subset W_{{\tilde{f}}_{v,0}}$ to the one-point set $\{v\}$ compatible with the present four properties where $W_{\tilde{f_{v,0}}}$ has the structure of a graph such that the vertex set consists of the following two.
\begin{enumerate}
\item All elements in ${(\bar{\tilde{f_{v,0}}})}^{-1}(\{g(v)-{\epsilon}_v,g(v)+{\epsilon}_v\})$.
\item The unique element in ${(\bar{\tilde{f_{v,0}}})}^{-1}(g(v))$. 
\end{enumerate}  
\item For the edge $e^{\prime} \ni v$ of $N(v)$ contained in the edge $e$ of $G$ and each point $x \in e^{\prime}-\{v\}$, ${({\phi}_{v,0} \circ q_{\tilde{f_{v,0}}})}^{-1}(x)$ is a closed, connected and orientable surface of genus $r_G(e)$ if $r_G(e) \geq 0$.
\item For the edge $e^{\prime} \ni v$ of $N(v)$ contained in the edge $e$ of $G$ and each point $x \in e^{\prime}-\{v\}$, ${({\phi}_{v,0} \circ q_{\tilde{f_{v,0}}})}^{-1}(x)$ is diffeomorphic to the $2$-dimensional unit sphere if $r_G(e)<0$.
\end{enumerate}
We change this function to a desired local smooth function $\tilde{f_v}$. We can choose finitely many trivial smooth bundles over the image $[g(v)-{\epsilon}_v,g(v)+{\epsilon}_v]$ whose fibers are diffeomorphic to the unit disk $D^2$ disjointly and apart from the singular set of the function. 
We apply such arguments throughout the present paper and this is due to the structures of Morse functions with a relative version of an important theorem in \cite{ehresmann} saying that smooth submersions with compact preimages give smooth bundles.
We can attach new functions instead. We do this procedure according to the following rule.
\begin{enumerate}
\item For the edge $e^{\prime} \ni v$ of $N(v)$ contained in the edge $e$ of $G$ satisfying $r_G(e)<0$ and $\bar{\tilde{f_{v,0}}}(e^{\prime})=[g(v)-{\epsilon}_v,g(v)]$, first we choose $\frac{|r_G(e)|}{2}$ trivial smooth bundles over the image $[g(v)-{\epsilon}_v,g(v)+{\epsilon}_v]$ whose fibers are diffeomorphic to the unit disk $D^2$ disjointly and apart from the singular set of the function so that the images by the restriction of $q_{\tilde{f_{v,0}}}$ to ${\tilde{f_{v,0}}}^{-1}([g(v)-{\epsilon}_v,g(v)])$ are all $e^{\prime}$. We attach copies of the function ${\tilde{f}}_{{\rm K,d},g(v),[g(v)-{\epsilon}_v,g(v)+{\epsilon}_v]}$ before with a trivial smooth bundle over $[g(v)-{\epsilon}_v,g(v)+{\epsilon}_v]$ whose fiber is diffeomorphic to the unit disk $D^2$ and which is apart from the singular set removed instead (we can construct this function by a fundamental argument on the structure of the function ${\tilde{f}}_{{\rm K,d},g(v),[g(v)-{\epsilon}_v,g(v)+{\epsilon}_v]}$). In the attachment, we must preserve the values at all points of the manifolds of the domains.
\item For the edge $e^{\prime} \ni v$ of $N(v)$ contained in the edge $e$ of $G$ satisfying $r_G(e)<0$ and $\bar{\tilde{f_{v,0}}}(e^{\prime})=[g(v),g(v)+{\epsilon}_v]$, first we choose $\frac{|r_G(e)|}{2}$ trivial smooth bundles over the image $[g(v)-{\epsilon}_v,g(v)+{\epsilon}_v]$ whose fibers are diffeomorphic to the unit disk $D^2$ disjointly and apart from the singular set of the function so that the images by the restriction of $q_{\tilde{f_{v,0}}}$ to ${\tilde{f_{v,0}}}^{-1}([g(v),g(v)+{\epsilon}_v])$ are all $e^{\prime}$. We attach copies of the function ${\tilde{f}}_{{\rm K,u},g(v),[g(v)-{\epsilon}_v,g(v)+{\epsilon}_v]}$ before with a trivial smooth bundle over $[g(v)-{\epsilon}_v,g(v)+{\epsilon}_v]$ whose fiber is diffeomorphic to the unit disk $D^2$ and which is apart from the singular set removed instead (we can construct this function by a fundamental argument on the structure of the function ${\tilde{f}}_{{\rm K,u},g(v),[g(v)-{\epsilon}_v,g(v)+{\epsilon}_v]}$). In the attachment, we must preserve the values at all points of the manifolds of the domains.
\end{enumerate}
Thus we have a desired local smooth function onto $[g(v)-{\epsilon}_v,g(v)+{\epsilon}_v]$.\\
\ \\
Hereafter, we omit rigorous notation and expositions on identifications of original abstract graphs and Reeb graphs of local or global smooth functions. We naturally identify them in similar arguments.  \\
\ \\
Ingredient 2 Construction around a vertex $v$ of degree greater than $1$ contained in some edge $e$ satisfying $r_G(e)<0$ where the function $g$ has a local extremum. \\
As in \cite{kitazawa}, we construct a local function as Ingredient $1$ and compose this with a smooth embedding into the plane whose image is a parabola $\{(t,g(v) \pm t^2) \mid t \in [-{\epsilon}_v,{\epsilon}_v].\} \subset {\mathbb{R}}^2$ so that the singular point of the local function is mapped to $(0,g(v))$ where ${\epsilon}_v>0$ is a sufficiently small real number. We compose the resulting map into the plane with the canonical projection to the second component and we have a desired function. We choose the sign $+$ ($-$) according to the condition that $g(v)$ is the local minimum (resp. maximum) of $g$. \\
\ \\ 
Ingredient 3 Construction around a vertex $v$ of degree $1$ contained in the edge $e$ satisfying $r_G(e)=-2$ where the function $g$ has a local extremum. \\
\ \\
We can have a special generic map on the total space of a non-trivial smooth bundle over a circle whose fiber is diffeomorphic to the $2$-dimensional unit sphere into the plane such that the restriction to the singular set is a smooth embedding. We can obtain the map so that the image of the singular set is the disjoint of two circles, which are centered at $(0,0)$ and whose radii are $\frac{1}{2}$ and $\frac{3}{2}$, respectively, and that the image of the map is the closure of the domain surrounded by the two embedded circles. We consider the restriction of the map to the preimage of the unit disk $D^2$ in the plane and compose this with a height function mapping $(x_1,x_2)$ to ${x_1}^2+{x_2}^2$. We compose this with a smooth function mapping $x$ to $\pm h(x-\frac{1}{4})$ with the domain restricted suitably. Here the sign $+$ ($-$) is chosen according to the condition that $g(v)$ is the local minimum (resp. maximum) of $g$. We consider the sum of the resulting function and the constant function whose values are always $g(v)$ and we thus have a desired local function. \\
\ \\
Ingredient 4 Construction around a vertex $v$ of degree $1$ contained in the edge $e$ satisfying $r_G(e) \neq -2$ and $r_G(e)=-2(l_0+1)$ for a positive integer $l_0>0$ where the function $g$ has a local extremum. \\
\ \\
Let ${\epsilon}_v>0$ be a small real number. We consider a smooth deformation $F_v:P \times [-1,1] \rightarrow \mathbb{R} \times [-1,1]$ where $P$ is a surface of the domain of ${\tilde{f}}_{{\rm P},\{t_j\},[-{\epsilon}_v,{\epsilon}_v]}$ such
that the length of $\{t_j\}$ is $l_0$. 
Let $F_{v,t}:P \rightarrow \mathbb{R}$ denote the map defined uniquely by the relation $F_v(x,t)=(F_{v,t}(x),t)$. Set $t_j$ in the following way.
\begin{itemize}
\item For $l_0=1$, let $t_j=t_1:=0$.
\item For $l_0>1$, let $t_1:=-\frac{{\epsilon}_v}{2}$ and $t_{l_0}:=\frac{{\epsilon}_v}{2}$ and $t_j:=-\frac{{\epsilon}_v}{2}+(j-1)\frac{{\epsilon}_v}{l_0-1}$ for $1<j<l_0$.
\end{itemize}
We can define ${\tilde{f}}_{{\rm P},\{t_j+\frac{t+1}{2}(t_{l_0-j+1}-t_j)\},[-{\epsilon}_v,{\epsilon}_v]}$ for $-1 \leq t \leq 1$ and set $F_{v,t}:={\tilde{f}}_{{\rm M},\{t_j+\frac{t+1}{2}(t_{l_0-j+1}-t_j)\},[-{\epsilon}_v,{\epsilon}_v]}$ for $-1 \leq t \leq 1$ suitably to define a smooth deformation $F_v:P \times [-1,1] \rightarrow \mathbb{R} \times [-1,1]$ such that the following properties hold.
\begin{enumerate}
\item The singular set consists of all points of the form $(s_{t,j},t)$ where $s_{t,j}$ is a singular point of the Morse function $F_{v,t}$.
\item On the interior of $P \times [-1,1]$, represented as ${\rm Int}\ P \times (-1,1)$, the map $F_v$ is a fold map.
\item For the fold map before, the restriction to the subset of the singular set obtained by removing all singular points in the preimage of $(0,0)$ is a smooth embedding. 
\end{enumerate}

We compose the restriction of this map to the preimage of the open disk which is centered at $(0,0)$ and whose radius is $\frac{1}{2}$ with a height function mapping $(x_1,x_2)$ to $\pm({x_1}^2+{x_2}^2)$.
We consider the sum of the resulting function and a constant function whose values are always $g(v)$ and we have a desired local function.

Note that for a sufficiently small real number $x_{g(v)}>0$ the preimage of $g(v) \pm x_{g(v)}$ is diffeomorphic to a closed, connected and non-orientable surface obtained by identifying the two connected components of the boundary of the surface of the domain of $\tilde{f}_{{\rm P},\{t_j\},[a,b]}$ with $l:=2l_0$. The genus is $2+2l_0$. Note also that the sign $+$ or $-$ is chosen according to the condition that $g(v)$ is the local minimum (resp. maximum) of $g$.

For this ingredient, see also \cite{saeki3}, explaining the differential topological theory on structures around preimages of smooth maps whose codimensions are $-1$: so-called theory of {\it fibers}. \\
\ \\
\ \\
Except these four, we can show as Theorem 1 of \cite{kitazawa}. This completes the proof.
\end{proof}
\begin{Rem}
\label{rem:1}
In an earlier version of \cite{kitazawa}, it was announced that we immediately have a similar answer to Problem 3 of \cite{kitazawa}. However, the answer is not so trivial since we need Ingredient 3 for example. 
\end{Rem}
We prepare two important lemmas.
\begin{Lem}
\label{lem:1}
Let $m>1$ be an integer.
Suppose two smooth functions $\tilde{f_1}$ and $\tilde{f_2}$ on an $m$-dimensional compact, connected and smooth manifold onto a small closed interval $[a-{\epsilon}_a,a+{\epsilon}_a]$ satisfying the following three properties where ${\epsilon}_a>0$ is a real number.
\begin{enumerate}
\item $\tilde{f_1}$ and $\tilde{f_2}$ are both Morse functions with exactly one singular value $a$.
\item The preimages of $a-{\epsilon}_a$ are diffeomorphic to closed manifolds $F_{{\rm low},1}$ and $F_{{\rm low},2}$ respectively. The preimages of $a+{\epsilon}_a$ are diffeomorphic to closed manifolds $F_{{\rm up},1}$ and $F_{{\rm up},2}$ respectively. 
\item The preimages ${\bar{\tilde{f_1}}}^{-1}(a)$ and ${\bar{\tilde{f_2}}}^{-1}(a)$ are both one-point sets.
\end{enumerate}
Then we have a smooth function $\tilde{f_{1,2}}$ on an $m$-dimensional compact, connected and smooth manifold onto $[a-{\epsilon}_a,a+{\epsilon}_a]$ satisfying the following five properties.
\begin{enumerate}
\item $\tilde{f_{1,2}}$ is a Morse function with exactly one singular value $a$.
\item The preimage of $a-{\epsilon}_a$ is diffeomorphic to the disjoint union of $F_{{\rm low},1}$ and $F_{{\rm low},2}$. The preimage of $a+{\epsilon}_a$ is diffeomorphic to the disjoint union of $F_{{\rm up},1}$ and $F_{{\rm up},2}$.
\item The preimage ${\bar{\tilde{f_{1,2}}}}^{-1}(a)$ is a one-point set.
\end{enumerate}
\end{Lem}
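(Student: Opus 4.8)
The plan is to obtain $\tilde{f_{1,2}}$ from the disjoint union $\tilde{f_1} \sqcup \tilde{f_2}$ by a modification supported near the value $a$. Put $N := N_1 \sqcup N_2$ and $\tilde{f} := \tilde{f_1} \sqcup \tilde{f_2} : N \to [a-{\epsilon}_a, a+{\epsilon}_a]$. This $\tilde{f}$ is a Morse function whose only singular value is $a$, whose preimages of $a-{\epsilon}_a$ and $a+{\epsilon}_a$ are $F_{{\rm low},1} \sqcup F_{{\rm low},2}$ and $F_{{\rm up},1} \sqcup F_{{\rm up},2}$, and which satisfies all of the desired conclusion except that ${\bar{\tilde{f}}}^{-1}(a)$ has exactly two points: by hypothesis ${\tilde{f}}^{-1}(a) = {\tilde{f_1}}^{-1}(a) \sqcup {\tilde{f_2}}^{-1}(a)$ is the disjoint union of two nonempty connected sets. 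So it suffices to fuse these two components into one while keeping the function Morse, keeping $a$ as the only singular value, and not changing the preimages of $a \mp {\epsilon}_a$.

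I would first choose regular points $p_i \in {\tilde{f_i}}^{-1}(a)$; these exist because ${\tilde{f_i}}^{-1}(a)$ is nonempty and is an $(m-1)$-dimensional manifold away from the finitely many singular points of $\tilde{f_i}$ on it (near which, in Morse coordinates, it is a quadric cone). Around $p_i$ take a chart $\psi_i : D^{m-1} \times [-2\delta, 2\delta] \hookrightarrow N_i$ with $\tilde{f_i} \circ \psi_i(x,t) = a+t$, for $\delta>0$ small enough that the image lies in the regular set and $2\delta < {\epsilon}_a$, and delete from $N$ the interiors of the two blocks ${\mathcal B}_i := \psi_i(D^{m-1} \times [-2\delta, 2\delta])$. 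Each deletion exposes a sphere $\partial {\mathcal B}_i$, a codimension-$0$ submanifold with corners along which $\tilde{f}$ has the product form $a+t$ on the side $\psi_i(S^{m-2} \times [-2\delta, 2\delta])$ and the constant values $a \mp 2\delta$ on the two caps $\psi_i(D^{m-1} \times \{\mp 2\delta\})$.

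The core of the argument is to build a connecting piece ${\mathcal Q}$: a compact $m$-manifold with corners with a function $\tilde{f_{\mathcal Q}} : {\mathcal Q} \to [a-2\delta, a+2\delta]$, Morse on the interior with $a$ as its only singular value, whose side boundary is two copies of $S^{m-2} \times [-2\delta, 2\delta]$ carrying the product form $a+t$, whose preimages of $a \mp 2\delta$ are each $D^{m-1} \sqcup D^{m-1}$, and --- this is the essential point --- whose singular fiber $\tilde{f_{\mathcal Q}}^{-1}(a)$ is connected. Here I would use the theory of handles attached to Morse cobordisms (see \cite{milnor}): begin with the product cobordism $(D^{m-1} \sqcup D^{m-1}) \times [a-2\delta, a-\delta]$; inside $\tilde{f_{\mathcal Q}}^{-1}([a-\delta, a+\delta])$ attach a handle of index $1$ fusing the two disks of the fiber and a handle of index $m-1$ resplitting the result into two disks --- for $m=2$ these are two saddles --- both attached in the interiors of the disks, so the side boundary stays a product, and both associated with critical points at the single value $a$; then finish with the product cobordism $(D^{m-1} \sqcup D^{m-1}) \times [a+\delta, a+2\delta]$. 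Passing the two handles, the fiber runs $D^{m-1} \sqcup D^{m-1} \to S^{m-2} \times [0,1] \to D^{m-1} \sqcup D^{m-1}$, and the fiber over $a$ is two disks joined by a band and pinched along an $S^{m-2}$, hence connected. The point to stress is that putting both new critical points at the common value $a$ is legitimate --- a Morse function need not be self-indexing --- so no spurious singular values are created; this is precisely the step that the naive connected sum of $\tilde{f_1}$ and $\tilde{f_2}$ at $p_1, p_2$ cannot achieve, because its connecting neck necessarily carries critical points near the two values $a \pm r$ attained on the boundaries of the removed balls.

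Finally I would glue ${\mathcal Q}$ back, identifying its side boundary with the sides of $\partial {\mathcal B}_1$ and $\partial {\mathcal B}_2$ and its caps with their caps, and then smooth the corners along the $\psi_i(S^{m-2} \times \{\mp 2\delta\})$; since $\tilde{f}$ and $\tilde{f_{\mathcal Q}}$ are both the coordinate function $a+t$ in a neighbourhood of each corner, this smoothing is compatible with the function and creates no critical points. The result is a compact, connected, smooth $m$-manifold with a Morse function $\tilde{f_{1,2}}$ whose only singular value is $a$; its preimage of $a-{\epsilon}_a$ is $F_{{\rm low},i}$ with an open $(m-1)$-disk removed and then glued back for $i=1,2$, i.e.\ $F_{{\rm low},1} \sqcup F_{{\rm low},2}$, and likewise its preimage of $a+{\epsilon}_a$ is $F_{{\rm up},1} \sqcup F_{{\rm up},2}$; and ${\tilde{f_{1,2}}}^{-1}(a)$ consists of ${\tilde{f_1}}^{-1}(a)$ and ${\tilde{f_2}}^{-1}(a)$, each with an open disk removed, glued along the connected fiber $\tilde{f_{\mathcal Q}}^{-1}(a)$, which meets the level-$a$ slices of both product sides; hence it is connected (for $m=2$, removing a disk from the $1$-complex ${\tilde{f_i}}^{-1}(a)$ may disconnect it, but each remaining piece still meets $\tilde{f_{\mathcal Q}}^{-1}(a)$, so connectedness persists). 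Therefore ${\bar{\tilde{f_{1,2}}}}^{-1}(a)$ is a one-point set, completing the proof. The main obstacle is exactly the construction of ${\mathcal Q}$ with all critical points at the single value $a$ and a connected singular fiber, together with the function-compatible corner smoothing.
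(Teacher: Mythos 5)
Your proposal is correct and is essentially the paper's own argument: your connecting piece $\mathcal{Q}$ (disk-pair fibers at both ends, product side boundary, connected singular fiber over $a$) is exactly the auxiliary function $\tilde{f_{\rm S}}$ of the paper, and in both cases one removes trivial $D^{m-1}$-bundle blocks over the interval from $\tilde{f_1}$ and $\tilde{f_2}$ away from their singular sets and glues the connector in preserving the values. The only difference is presentational: you build the connector explicitly by attaching an index-$1$ and an index-$(m-1)$ handle at the common critical value $a$, while the paper obtains $\tilde{f_{\rm S}}$ by citing its Ingredient~1 (Michalak-type) construction.
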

\begin{proof}
Besides $\tilde{f_1}$ and $\tilde{f_2}$, we construct a similar smooth function $\tilde{f_{\rm S}}$ on $m$-dimensional compact, connected and smooth manifolds onto a small closed interval $[a-{\epsilon}_a,a+{\epsilon}_a]$ satisfying the following three as in Ingredient 1 of the proof of Main Theorem \ref{mthm:1}.
\begin{enumerate}
\item This is a Morse function with exactly one singular value $a$.
\item The preimages of $a-{\epsilon}_a$ and $a+{\epsilon}_a$ are diffeomorphic to a disjoint union of two copies of the ($m-1$)-dimensional unit disk.
\item The preimage $\tilde{f_{\rm S}}(a)$ is a one-point set.
\end{enumerate}
For $\tilde{f_{\rm S}}$, we can choose two trivial smooth bundles over the image $[a-{\epsilon}_a,a+{\epsilon}_a]$ whose fibers are diffeomorphic to the unit disk $D^{m-1}$ disjointly and apart from the singular set of the function as in Ingredient 1 of the proof of Main Theorem \ref{mthm:1}. We can also choose them so that the total spaces are mapped to the unions of two edges by the quotient map to the Reeb space and that the sets of the two edges are disjoint for these two bundles: note that the Reeb space is regarded as a graph with exactly four edges.   
For $\tilde{f_j}$ ($j=1,2$) we can choose a trivial smooth bundle over the image $[a-{\epsilon}_a,a+{\epsilon}_a]$ whose fiber is diffeomorphic to the unit disk $D^{m-1}$ apart from the singular set of the function similarly.

We glue these functions together to obtain a desired smooth function as in Ingredient 1 of the proof of Main Theorem \ref{mthm:1}. This completes the proof.
\end{proof}
\begin{Lem}
\label{lem:2}
Let $m>1$ be an integer.
Suppose two smooth functions $\tilde{f_1}$ and $\tilde{f_2}$ on an $m$-dimensional compact, connected and smooth manifold onto a small closed interval $[a-{\epsilon}_a,a+{\epsilon}_a]$ satisfying the following three properties where ${\epsilon}_a>0$ is a real number.
\begin{enumerate}
\item $\tilde{f_1}$ and $\tilde{f_2}$ are both Morse functions with exactly one singular value $a$.
\item The preimages of $a-{\epsilon}_a$ are diffeomorphic to closed manifolds $F_{{\rm low},1}$ and $F_{{\rm low},2}$ respectively. 
Let $F_{{\rm low},1,0} \subset F_{{\rm low},1}$ and $F_{{\rm low},2,0} \subset F_{{\rm low},2}$ be connected components of the manifolds.
The preimages of $a+{\epsilon}_a$ are diffeomorphic to closed manifolds $F_{{\rm up},1}$ and $F_{{\rm up},2}$ respectively. 
\item The preimages ${\bar{\tilde{f_1}}}^{-1}(a)$ and ${\bar{\tilde{f_2}}}^{-1}(a)$ are both one-point sets in the Reeb spaces.
\end{enumerate}
Then we have a smooth function $\tilde{f_{1,2}}$ on an $m$-dimensional compact, connected and smooth manifold onto $[a-{\epsilon}_a,a+{\epsilon}_a]$ satisfying the following five properties.
\begin{enumerate}
\item $\tilde{f_{1,2}}$ is a Morse function with exactly one singular value $a$.
\item The preimage of $a-{\epsilon}_a$ is diffeomorphic to the disjoint union of a manifold diffeomorphic to $F_{{\rm low},1}-F_{{\rm low},1,0}$, a manifold diffeomorphic to $F_{{\rm low},2}-F_{{\rm low},2,0}$, and a manifold represented as a connected sum of the two manifolds $F_{{\rm low},1,0}$ and $F_{{\rm low},2,0}$ considered in the smooth category. The preimage of $a+{\epsilon}$ is diffeomorphic to the disjoint union of $F_{{\rm up},1}$ and $F_{{\rm up},2}$.
\item The preimage ${\bar{\tilde{f_{1,2}}}}^{-1}(a)$ is a one-point set.
\end{enumerate}
\end{Lem}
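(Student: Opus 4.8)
The plan is to imitate the proof of Lemma \ref{lem:1}, replacing the auxiliary connecting function used there by one whose effect is to perform a smooth connected sum on the prescribed lower components $F_{{\rm low},1,0}$ and $F_{{\rm low},2,0}$ while keeping the upper preimage a disjoint union. So first I would construct, by the handle-theoretic methods of Ingredient 1 of the proof of Main Theorem \ref{mthm:1} and of the proof of Lemma \ref{lem:1}, an auxiliary Morse function $\tilde{f_{\rm C}}$ on an $m$-dimensional compact and connected manifold onto $[a-{\epsilon}_a,a+{\epsilon}_a]$ having exactly one singular value $a$ whose preimage by the quotient map to the Reeb space is a single point, and such that ${\tilde{f_{\rm C}}}^{-1}(a-{\epsilon}_a)$ is one copy of $S^{m-2} \times [0,1]$, ${\tilde{f_{\rm C}}}^{-1}(a+{\epsilon}_a)$ is a disjoint union of two copies of $D^{m-1}$, and the remaining part of the boundary is a disjoint union of two copies of $S^{m-2} \times [a-{\epsilon}_a,a+{\epsilon}_a]$, the $j$-th of which joins one boundary sphere of $S^{m-2} \times [0,1]$ to the boundary sphere of the $j$-th upper disk. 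Up to a single critical point of index $m-1$ this is the elementary cobordism splitting one $(m-1)$-dimensional cylinder into two $(m-1)$-dimensional disks (for $m=2$ a single saddle with the appropriate global side identifications), and its Reeb space is a tree with exactly one vertex of degree three; in the intended application $m=3$, so $S^{m-2}=S^1$ and the critical point has index $2$.

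Next I would make room in the domains $M_1$ and $M_2$ of $\tilde{f_1}$ and $\tilde{f_2}$. In $M_i$ choose a smoothly embedded arc running transversally to the level sets from a point of $F_{{\rm low},i,0} \subset {\tilde{f_i}}^{-1}(a-{\epsilon}_a)$ to a point of ${\tilde{f_i}}^{-1}(a+{\epsilon}_a)$ and disjoint from the singular set, for instance a generic integral curve of a gradient-like vector field. By the relative form of the theorem of Ehresmann used throughout the present paper this arc has a tubular neighborhood $T_i$ diffeomorphic to $D^{m-1} \times [a-{\epsilon}_a,a+{\epsilon}_a]$ on which $\tilde{f_i}$ is the projection to the second factor. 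Removing the interior of $T_i$, the restriction of $\tilde{f_i}$ to $M_i \setminus {\rm Int}\ T_i$ is again a Morse function with the single singular value $a$ and a single point of the Reeb space over $a$; its preimage of $a-{\epsilon}_a$ is the disjoint union of $F_{{\rm low},i}-F_{{\rm low},i,0}$ and a copy of $F_{{\rm low},i,0}$ with an open disk removed, its preimage of $a+{\epsilon}_a$ is a copy of $F_{{\rm up},i}$ with an open disk removed, and it has a new ``vertical'' boundary piece diffeomorphic to $S^{m-2} \times [a-{\epsilon}_a,a+{\epsilon}_a]$. Then glue $M_1 \setminus {\rm Int}\ T_1$ and $M_2 \setminus {\rm Int}\ T_2$ to the two vertical boundary pieces of $\tilde{f_{\rm C}}$ over the common interval $[a-{\epsilon}_a,a+{\epsilon}_a]$, by the identity on $S^{m-2} \times [a-{\epsilon}_a,a+{\epsilon}_a]$, so that the three functions agree along the gluing loci, and smooth the corners as in Ingredient 1 of the proof of Main Theorem \ref{mthm:1}. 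This produces a smooth function $\tilde{f_{1,2}}$ on an $m$-dimensional compact and connected manifold onto $[a-{\epsilon}_a,a+{\epsilon}_a]$.

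It then remains to check the five asserted properties. No new singular points arise from the gluing, so $a$ is still the only singular value and $\tilde{f_{1,2}}$ is a Morse function there. At level $a-{\epsilon}_a$ the two punctured copies of $F_{{\rm low},1,0}$ and $F_{{\rm low},2,0}$ are joined along their new boundary spheres through the cylinder ${\tilde{f_{\rm C}}}^{-1}(a-{\epsilon}_a)$, which produces precisely the smooth connected sum of $F_{{\rm low},1,0}$ and $F_{{\rm low},2,0}$, whereas $F_{{\rm low},1}-F_{{\rm low},1,0}$ and $F_{{\rm low},2}-F_{{\rm low},2,0}$ are untouched; at level $a+{\epsilon}_a$ the two upper disks of $\tilde{f_{\rm C}}$ cap off the two punctures separately and recover $F_{{\rm up},1}$ and $F_{{\rm up},2}$. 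Finally the singular level set of $\tilde{f_{1,2}}$ over $a$ is the union of the singular level sets of $\tilde{f_1}$, $\tilde{f_2}$ and $\tilde{f_{\rm C}}$ with small disks removed, glued along nonempty loci; all three are connected, hence so is their union, so ${\bar{\tilde{f_{1,2}}}}^{-1}(a)$ is a single point and the Reeb space behaves as required.

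I expect the only genuinely new point to be the construction of $\tilde{f_{\rm C}}$ in the first paragraph: arranging its lower preimage to be a single cylinder and its upper preimage two disks, with the single critical value realised by a single point of the Reeb space, together with the usual bookkeeping of boundaries and corners (and, when $m$ is small, the globalisation of the two-dimensional saddle model). Once $\tilde{f_{\rm C}}$ is available, the remaining steps repeat arguments already carried out in Ingredient 1 of the proof of Main Theorem \ref{mthm:1} and in the proof of Lemma \ref{lem:1}.
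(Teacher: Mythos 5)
Your proposal is correct and is essentially the paper's own argument: drill a value-preserving trivial $D^{m-1}$-bundle over $[a-{\epsilon}_a,a+{\epsilon}_a]$ out of each $M_j$ starting in $F_{{\rm low},j,0}$ and away from the singular set, and splice in an auxiliary one-critical-value connector whose Reeb space is a tripod, gluing along the vertical boundaries so that values are preserved. The only difference is cosmetic: your connector $\tilde{f_{\rm C}}$ (lower level an $S^{m-2}\times[0,1]$, upper level two disks) is in effect the paper's $\tilde{f_{\rm S,up}}$ (lower level one disk, upper level two disks) with its two trivial $D^{m-1}$-bundles already removed, which if anything handles the boundary bookkeeping more cleanly, since all of its vertical boundary is consumed in the gluing and the endpoint level sets come out closed as required.
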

\begin{proof}
Besides $\tilde{f_1}$ and $\tilde{f_2}$, we construct a similar smooth function $\tilde{f_{\rm S,up}}$ on $m$-dimensional compact, connected and smooth manifolds onto a small closed interval $[a-{\epsilon}_a,a+{\epsilon}_a]$ satisfying the following three as in Ingredient 1 of the proof of Main Theorem \ref{mthm:1}.
\begin{enumerate}
\item This is a Morse function with exactly one singular value $a$.
\item The preimages of $a-{\epsilon}$ and $a+{\epsilon}$ are diffeomorphic to the ($m-1$)-dimensional unit disk and a disjoint union of two copies of the ($m-1$)-dimensional unit disk respectively. 
\item The preimage $\tilde{f_{\rm S,up}}(a)$ is a one-point set.
\end{enumerate}
For $\tilde{f_{\rm S,up}}$, we can choose two trivial smooth bundles over the image $[a-{\epsilon}_a,a+{\epsilon}_a]$ whose fibers are diffeomorphic to the unit disk $D^{m-1}$ disjointly and apart from the singular set of the function as in Ingredient 1 of the proof of Main Theorem \ref{mthm:1}. We can also choose them so that the total spaces are mapped to the unions of two edges by the quotient map to the Reeb space and that the pairs of the two edges are distinct for these two bundles: note that the Reeb space is regarded as a graph with exactly three edges. 

We identify $F_{{\rm low},1,0} \subset F_{{\rm low},1}$ and $F_{{\rm low},2,0} \subset F_{{\rm low},2}$ with the preimages of $a-{\epsilon}$ for functions $\tilde{f_1}$ and  $\tilde{f_2}$ in a suitable way. 
We can choose a trivial smooth bundle over the image $[a-{\epsilon}_a,a+{\epsilon}_a]$ whose fiber is diffeomorphic to the unit disk $D^{m-1}$ and a fiber of which is in $F_{{\rm low},j,0} \subset F_{{\rm low},j}$ apart from the singular set of each of these two functions as in Ingredient 1 of the proof of Main Theorem \ref{mthm:1}.
We glue these functions together to obtain a desired smooth function as in Ingredient 1 of the proof of Main Theorem \ref{mthm:1}. This completes the proof.
\end{proof}
\begin{proof}[A proof of Main Theorem \ref{mthm:2}.]

We first define three kinds of smooth functions.
Let $a<t<b$ be real numbers and let $l>0$. Let $\tilde{f}_{{\rm P,0},t,[a,b]}$ denote a smooth function on a $3$-dimensional compact and connected manifold satisfying the following five.
\begin{enumerate}
\item The Reeb space is homeomorphic to a closed interval.
\item The preimage of $\{a,b\}$ and the boundary of the manifold of the domain agree.  
\item The preimages of $a$ and $b$ are both diffeomorphic to the real projective plane.
\item There exist exactly two singular points. $t$ is the unique singular value.
\item Around each singular point, the function is a Morse function.
\end{enumerate}

We explain the structure of such a function, since this needs a non-trivial argument. We consider a product of a copy of the real projective plane $P_S$ and a closed interval $[0,1]$. We attach a so-called {\it $1$-handle}, diffeomorphic to $D^2 \times [0,1] \supset D^2 \times \{0,1\}$, to a smoothly and disjointly embedded two copies of the unit disk $D^2$ in $P_S \times \{0\}$. We also attach a so-called {\it $2$-handle}, diffeomorphic to $D^2 \times [0,1] \supset \partial D^2 \times [0,1]$, to a smoothly embedded surface, diffeomorphic to $\partial D^2 \times [0,1]$ and apart from the two embedded copies of the disk $D^2$ before, in $P_S \times \{0\}$. 
Furthermore, we can do this so that after removing the surface diffeomorphic to $\partial D^2 \times [0,1]$ from $P_S \times \{0\}$, remaining two connected components are diffeomorphic to the $2$-dimensional open unit disk and the interior of the M\"{o}bius band respectively and that the two embedded copies of the disk $D^2$ before are in different connected components. This is a non-trivial argument on this construction and this produces a desired function. This theory is used in the proof of main theorems and propositions of \cite{michalak} and (an earlier version of) \cite{kitazawa} explains the main theorem using this.

For the well-known fundamental correspondence between singular points of Morse functions and so-called ({\it $k$-}){\it handles} where $k$ is a non-negative integer smaller than or equal to the dimension of the manifold of the domain, consult \cite{milnor} as presented in the proof of Main Theorem \ref{mthm:1}.  

${\tilde{f}}_{{\rm N,up},l,t,[a,b]}$ (${\tilde{f}}_{{\rm N,low},l,t,[a,b]}$) denotes a function on a $3$-dimensional compact and connected manifold satisfying the following six. We can understand this by a fundamental argument on $1$-handles and we omit the exposition. 
\begin{enumerate}
\item The Reeb space is homeomorphic to a finite and connected graph.
\item The preimage of $\{a,b\}$ and the boundary of the manifold of the domain agree.
\item The preimage of $a$ (resp. $b$) is diffeomorphic to a disjoint union of $l+1$ copies of the real projective plane.
\item The preimage of $b$ (resp. $a$) is diffeomorphic to a closed, connected, and non-orientable surface of genus $1+l$.
\item There exist exactly $l$ singular points. $t$ is the unique singular value.
\item Around each singular point, the function is a Morse function.
\end{enumerate}

Let $a<t<b$ be real numbers.
${\tilde{f}}_{{\rm T,up},t,[a,b]}$ (${\tilde{f}}_{{\rm T,low},t,[a,b]}$) denotes a function on a $3$-dimensional compact and connected manifold satisfying the following five.
\begin{enumerate}
\item The function has exactly one singular point and $t$ is the singular value. The function is a Morse function there.
\item The Reeb space is homeomorphic to a closed interval.
\item The preimage of $\{a,b\}$ and the boundary of the manifold of the domain agree.
\item The preimage of $a$ (resp. $b$) is diffeomorphic to the $2$-dimensional unit sphere.
\item The preimage of $b$ (resp. $a$) is diffeomorphic to the torus.
\end{enumerate}
This is also in \cite{kitazawa}.

Proving that we can construct desired local functions in the following cases completes the proof.
\begin{itemize}
\item Around a vertex $v$ contained in some edge $e \in A_{{\rm low},v}$ or $e \in A_{{\rm up},v}$ where the function $g$ does not have a local extremum. 
\item Around a vertex $v$ contained in some edge $e \in A_{{\rm low},v}$ or $e \in A_{{\rm up},v}$ where the function $g$ has a local extremum. 
\end{itemize}
Part 1 The former case. \\
\indent We present the construction in the first case. First take a sufficiently small real number ${\epsilon}_v>0$. Hereafter, let $E_v$ denote the set of all edges containing $v$.
Let $E_{{\rm low},v}$ denote the set of all edges containing $v$ as the point at which the restrictions of the function $g$ to the edges have the maxima.
Let $E_{{\rm up},v}$ denote the set of all edges containing $v$ as the point at which the restrictions of the function $g$ to the edges have the maxima.\\

\noindent Case 1 The case where $\sharp A_{{\rm up},v}=\sharp A_{{\rm low},v}$.\\
\indent First prepare $\sharp A_{{\rm up},v}=\sharp A_{{\rm low},v}$ copies of the function $\tilde{f}_{{\rm P,0},g(v),[g(v)-{\epsilon}_v,g(v)+{\epsilon}_v]}$ before.

Let $E_{{\rm low},v}-A_{{\rm low},v}$ and $E_{{\rm up},v}-A_{{\rm up},v}$ be both empty. We apply Lemma \ref{lem:1} one after another and apply Ingredient 1 of the proof of Main Theorem \ref{mthm:1} to complete the proof. Note that the latter
 argument increases genera of closed and connected surfaces of preimages of points in the interiors of edges by $2$ where we regard the Reeb spaces naturally as graphs.

Let $E_{{\rm low},v}-A_{{\rm low},v}$ and $E_{{\rm up},v}-A_{{\rm up},v}$ be both non-empty. 
We first construct a desired function as the previous case where $A_{{\rm up},v}$ and $A_{{\rm low},v}$ play roles $E_{{\rm low},v}$ and $E_{{\rm up},v}$ play in the previous case.
We argue the case where $E_{{\rm low},v}-A_{{\rm low},v}$ and $E_{{\rm up},v}-A_{{\rm up},v}$ play roles $E_{{\rm low},v}$ and $E_{{\rm up},v}$ play in the case of Main Theorem \ref{mthm:1}: we restrict the original $r_G$ suitably and apply the proof of Main Theorem \ref{mthm:1}. We then apply Lemma \ref{lem:1} to complete the proof.

Let either $E_{{\rm low},v}-A_{{\rm low},v}$ or $E_{{\rm up},v}-A_{{\rm up},v}$ be non-empty. 
We first construct a desired function as the previous case where $A_{{\rm up},v}$ and $A_{{\rm low},v}$ play roles $E_{{\rm low},v}$ and $E_{{\rm up},v}$ play in the case in the beginning.
We choose the empty set between $E_{{\rm low},v}-A_{{\rm low},v}$ and $E_{{\rm up},v}-A_{{\rm up},v}$ and replace this by a one-element set.
We argue the case where these two sets play roles $E_{{\rm low},v}$ and $E_{{\rm up},v}$ play in the case of Main Theorem \ref{mthm:1}: we restrict the original $r_G$ suitably, define the value at the element of the one-element set before as $0$ and apply the proof of Main Theorem \ref{mthm:1} as before. We then apply Lemma \ref{lem:2} to complete the proof.\\
\ \\
This completes the construction of a desired function. \\
\ \\
\noindent Case 2 The case where $\sharp A_{{\rm up},v} \neq \sharp A_{{\rm low},v}$.\\
\indent Without loss of genericity, we may assume $\sharp A_{{\rm up},v} > \sharp A_{{\rm low},v}$. We can show similarly if $\sharp A_{{\rm up},v} < \sharp A_{{\rm low},v}$.

By the assumption on $D_v$ and ${r_G}^{\prime}(e)$ for $e \in {B_{{\rm low},v}}$, we can choose some edges of ${B_{{\rm low},v}}$, the set of all of which is denoted by ${C_{{\rm low},v}}$, and choose an even integer ${r_G}^{\prime \prime}(e)>0$ for $e \in {C_{{\rm low},v}}$ satisfying the following rules.
\begin{enumerate}
\item ${\Sigma}_{e \in {C_{{\rm low},v}}} {r_G}^{\prime \prime}(e)=D_v$.
\item ${r_G}^{\prime \prime}(e) \leq {r_G}^{\prime}(e)$.
\end{enumerate}
We prepare a copy of the function ${\tilde{f}}_{{\rm N,low},{r_G}^{\prime \prime}(e)-1,g(v),[g(v)-{\epsilon}_v,g(v)+{\epsilon}_v]}$ before for $e \in {C_{{\rm low},v}}$.

We construct a smooth function under the following conditions. 
\begin{enumerate}
\item We change the values of $r_G$ on $A_{{\rm up},v} \sqcup A_{{\rm low},v}$ to $-1$ and those on $(B_{{\rm up},v} \sqcup B_{{\rm low},v})-(A_{{\rm up},v} \sqcup A_{{\rm low},v})$ to $0$  . 
\item We remove $D_v$ edges in $A_{{\rm up},v}$ from $E_{{\rm up},v}$, without changing $E_{{\rm low},v}$.
\end{enumerate}
We construct a desired function as in Case 1.

We apply Lemma \ref{lem:2} one after another. At each step, set the preimage of $g(v)-{\epsilon}_v$ for the copy of the function ${\tilde{f}}_{{\rm N,low},{r_G}^{\prime \prime}(e)-1,g(v),[g(v)-{\epsilon}_v,g(v)+{\epsilon}_v]}$ before for $e \in {C_{{\rm low},v}}$ as the connected component identified with $F_{{\rm low},1,0}$ and set the intersection of the preimage of $g(v)-{\epsilon}_v$ and the preimage of $e$ for the latter function and the quotient map onto the Reeb space as the connected component identified with $F_{{\rm low},2,0}$.

Last we apply arguments in Ingredient 1 of the proof of Main Theorem \ref{mthm:1} to increase the genus of the preimage for each edge $e$ by $2k_e$ for a suitable integer $k_e \geq 0$. This completes the proof.  

\ \\
Part 2 The latter case. \\
\indent We present the construction in the latter case. 
We apply technique in Ingredient 2 of the proof of Main Theorem \ref{mthm:1}. We first construct a local function as in Part 1. We can and must construct the map regarding $\sharp A_{{\rm up},v}=\sharp A_{{\rm low},v}$ there by the conditions on $r_G$ and we can apply the presented technique to complete the construction.\\
\ \\
This completes the proof.

\end{proof}

\begin{Ex}
\label{ex:1}
We consider an explicit situation of Main Theorem \ref{mthm:2} where for a vertex $v$ the following three hold. We choose a sufficiently small real number ${\epsilon}_v>0$.

\begin{enumerate}
\item $E_v$ consists of exactly $5$ edges.
\item $E_{{\rm up},v}$ consists of exactly $3$ edges. The values of $r_G$ there are, $-1$, $-1$, and some non-negative integer, respectively.
\item At one edge of $E_{{\rm low},v}$, the value of $r_G$ is $-2$ and at the remaining edge, the value is some non-negative integer.
\end{enumerate}
\end{Ex}
We end the present paper by the following remarks.
\begin{Rem}
\label{rem:2}
In obtaining results similar to Main Theorems, for example, the first constraint that $D_v$ is even in Main Theorem \ref{mthm:2} is a necessary condition. This is due to constraints from the cobordism theory of closed manifolds. Consult \cite{saeki4}. 

There remain several cases satisfying the condition on this theory. For example, consider the following case for a vertex $v$.
\begin{enumerate}
\item $E_v$ consists of exactly $3$ edges.
\item $E_{{\rm up},v}$ consists of exactly $2$ edges. The values of $r_G$ there are odd and negative.
\item At the unique edge of $E_{{\rm low},v}$, the value of $r_G$ is non-negative. 
\end{enumerate}
Note that in the situation of \cite{saeki4}, we have a positive result. On the other hand, we do not use explicit functions such as Morse(-Bott) functions there. 
By fundamental discussions on handles we can see that we cannot construct Morse functions.
\end{Rem}
\begin{Rem}
In Main Theorems, functions we have obtained are Morse functions around each singular point $p$ at the vertex $q_f(p)$ where $g$ does not have a local extremum. We do not know whether we can weaken the conditions on $r_G$ or preimages of regular values so that this also holds.
\end{Rem}
\section{Acknowledgement.}
\label{sec:3}
\thanks{The author is a member of the project JSPS KAKENHI Grant Number JP17H06128 "Innovative research of geometric topology and singularities of differentiable mappings" (Principal Investigator: Osamu Saeki) and this work is supported by the project.

\end{document}